\title{A combinatorial proof of Fisher's Inequality}
\author[1]{Rogers~Mathew\footnote{This author was supported by a grant from the Science and Engineering Research
Board, Department of Science and Technology, Govt. of India (project number: MTR/2019/000550).}}
\author[2]{Tapas Kumar Mishra}
\affil[1]
{
	Department of Computer Science and Engineering, \authorcr
	Indian Institute of Technology, Hyderabad \authorcr
	rogers@iith.ac.in
}
\affil[2]
{
	Department of Computer Science and Engineering, \authorcr
	National Institute of Technology, Rourkela \authorcr
 mishrat@nitrkl.ac.in
}
\theoremstyle{definition}
\theoremstyle{plain}
\newtheorem{theorem}{Theorem}
\theoremstyle{remark}
\newtheoremstyle{plainitshape}
  {}
  {}
  {\itshape}
  {}
  {\itshape}
  {.}
  {0.5em}
  {}
\theoremstyle{plainitshape}
\newtheoremstyle{cases}
  {}
  {}
  {}
  {}
  {}
  {\newline}
  {0.5em}
  {{\itshape \thmname{#1}} \thmnumber{#2} ({\itshape\thmnote{#3}}).\medskip}
\theoremstyle{cases}
\newtheoremstyle{constructions}
  {}
  {}
  {}
  {}
  {}
  {}
  {0.5em}
  {{\itshape \thmname{#1}} \thmnumber{#2}\medskip}
\theoremstyle{constructions}
\definecolor{ao(english)}{rgb}{0.0, 0.5, 0.0}
\date{}
\begin{document}
\maketitle
\begin{abstract}
In this note, we give a simple, counting based proof of Fisher's Inequality that does not use any tools from linear algebra.  
\end{abstract}
\section{Introduction}
Let $k$ be a positive integer and let $\mathcal{A}$ be a family of subsets of $[n]$. Fisher's Inequality states that if the cardinality of the intersection of every pair of distinct sets in $\mathcal{A}$ is $k$, then $|\mathcal{A}| \leq n$. R. A. Fisher \cite{fisher1940examination} while studying Balanced Incomplete Block Designs (BIBDs) proved that the number of points never exceeds the number of blocks. R.C. Bose \cite{bose1949} proved the Fisher's inequality when all the sets in the family $\mathcal{A}$ are of the same size. In \cite{de1948combinatorial}, it was shown that a maximal family of subsets of $[n]$ that has exactly one common element among every pair of distinct sets has cardinality at most $n$. The first proof of the general form of the Fisher's Inequality was given by K. N. Majumdar \cite{majumdar1953} using linear algebraic methods. L{\'a}szl{\'o} Babai in \cite{babai1987nonuniform} remarked that it would be challenging to obtain a proof of Fisher's Inequality that does not rely on tools from linear algebra. 
D. R. Woodall \cite{WOODALL1997} took up the challenge and gave the first fully combinatorial proof of the inequality. Below, we give a simple, alternate proof of the inequality that does not rely on tools from linear algebra.  

\begin{theorem}(Fisher's Inequality)\label{thm:1} 
Let $k$ be a positive integer and let $\mathcal{A} = \{A_1, \ldots , A_m\}$ be a family of subsets of $U = \{e_1, \ldots , e_n\}$. If $|A_i \cap A_j|=k$ for each $1 \leq i < j \leq m$, then $m \leq n$.
\end{theorem}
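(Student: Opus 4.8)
The plan is to argue through the element \emph{degrees}. For $x \in U$ let $d_x := |\{i : x \in A_i\}|$ be the number of sets containing $x$. The engine is a single double-counting identity: for each fixed $i$, summing degrees over $A_i$ gives $\sum_{x \in A_i} d_x = \sum_{j=1}^m |A_i \cap A_j| = |A_i| + k(m-1)$, so that $\sum_{x \in A_i}(d_x - 1) = k(m-1)$ for every $i$. On the other hand, weighting each element inversely by its degree and switching the order of summation, $\sum_{i=1}^m \sum_{x \in A_i} \frac{1}{d_x} = \sum_{x : d_x \ge 1} d_x \cdot \frac{1}{d_x} = |\{x \in U : d_x \ge 1\}| \le n$, since each $x$ of positive degree lies in exactly $d_x$ sets, each contributing $\frac{1}{d_x}$. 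Hence it suffices to prove the \emph{local} inequality $\sum_{x \in A_i} \frac{1}{d_x} \ge 1$ for every $i$: summing it over the $m$ sets gives $m \le |\{x : d_x \ge 1\}| \le n$.

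First I would dispose of the degenerate case in which some set, say $A_1$, has $|A_1| = k$. Then $|A_1 \cap A_j| = k = |A_1|$ forces $A_1 \subseteq A_j$ for all $j \ge 2$, and for distinct $j, j' \ge 2$ the containment $A_1 \subseteq A_j \cap A_{j'}$ together with $|A_j \cap A_{j'}| = k$ gives $A_j \cap A_{j'} = A_1$. Thus the petals $A_j \setminus A_1$ are pairwise disjoint and nonempty (distinctness forces $|A_j| > k$), and they live inside $U \setminus A_1$; counting them gives $m - 1 \le n - k$, hence $m \le n - k + 1 \le n$. So from now on I may assume $|A_i| \ge k+1$ for every $i$.

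The heart of the matter is the local inequality $\sum_{x \in A_i} \frac{1}{d_x} \ge 1$ in this main case, and this is the step I expect to be the main obstacle. Purely local estimates are too weak: Cauchy--Schwarz only yields $\sum_{x \in A_i} \frac{1}{d_x} \ge \frac{|A_i|^2}{|A_i| + k(m-1)}$, which can drop below $1$, being tight (and equal to $1$) exactly for symmetric designs, where $|A_i|(|A_i|-1) = k(m-1)$. The inequality must therefore use the \emph{global} constant-intersection structure, not merely the degree sum of one set. The structural input I would exploit is that if two sets $A_j, A_{j'}$ meet $A_i$ in the \emph{same} $k$-subset $S \subseteq A_i$, then $S \subseteq A_j \cap A_{j'}$ and $|A_j \cap A_{j'}| = k$ force $A_j \cap A_{j'} = S$, so such sets form a sunflower with core $S$ and have pairwise disjoint petals outside $A_i$. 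Grouping the sets by their trace on $A_i$ and feeding this disjointness into $d_x = 1 + \#\{\text{traces through } x\}$ should bound the high-degree elements tightly enough to force $\sum_{x \in A_i} \frac{1}{d_x} \ge 1$. An appealing reformulation that isolates the same difficulty is induction on $m$: if some element has degree $1$ it is private to a unique set, and deleting that set leaves a family of $m-1$ sets on at most $n-1$ elements, so the induction hypothesis gives $m - 1 \le n - 1$; the irreducible case is precisely when every element has degree at least $2$, which is exactly where the global counting above must do the work.
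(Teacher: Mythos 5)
Your degenerate case ($|A_1| = k$) is handled correctly and matches the paper's parenthetical remark, and your double count $\sum_{x \in A_i} d_x = |A_i| + k(m-1)$ is the combinatorial content of the paper's Equation (3). But the cornerstone of your main case, the local inequality $\sum_{x \in A_i} \frac{1}{d_x} \ge 1$, is simply \emph{false}, so no refinement of the sunflower/trace analysis can establish it. Take $k=1$, $n = m = 4$, and the near-pencil $A_1 = \{e_1,e_2\}$, $A_2 = \{e_1,e_3\}$, $A_3 = \{e_1,e_4\}$, $A_4 = \{e_2,e_3,e_4\}$: every pairwise intersection has size exactly $1$, every set has size greater than $k$, and every element has degree at least $2$ --- so the example also survives your inductive reduction (there is no degree-$1$ element) and sits squarely inside your ``irreducible case.'' Yet $d_{e_1} = 3$ and $d_{e_2} = d_{e_3} = d_{e_4} = 2$, so $\sum_{x \in A_1} \frac{1}{d_x} = \frac{1}{3} + \frac{1}{2} = \frac{5}{6} < 1$; with the general near-pencil on $n$ points a small set gives $\frac{1}{n-1} + \frac{1}{2} \to \frac{1}{2}$, so the failure is not marginal. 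Your global identity $\sum_i \sum_{x \in A_i} \frac{1}{d_x} = |\{x : d_x \ge 1\}|$ does hold (both sides equal $4$ in the example), which shows the deficit of the small sets is compensated exactly by the surplus of the large one: the conclusion $m \le n$ can only emerge from a genuinely global redistribution between sets, and that redistribution is precisely the difficulty of the nonuniform Fisher inequality that your set-by-set scheme cannot capture.

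For contrast, the paper sidesteps this by running your same summation identity against a \emph{nonuniform} weighting: assuming $m \ge n+1$, a pigeonhole count over functions $f:[m]\to[s]$ (a combinatorial surrogate for Siegel's lemma) produces a nonzero integer vector $\tau$ with $\sum_i \tau(i)\, x_{i,j} = 0$ for every element $e_j$. Summing these equations over the elements of $A_i$ yields $\tau(i)|A_i| + k\sum_{j \ne i}\tau(j) = 0$ --- exactly your identity with weights $\tau(i)$ in place of all-ones weights --- and summing over $i$ forces $\sum_i \tau(i) = 0$, whence $\tau(i)(|A_i| - k) = 0$ for any $i$ with $\tau(i) \ne 0$, a contradiction with $|A_i| > k$. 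In other words, the identity you found is the right engine, but with uniform weights it carries no information about $m$ versus $n$ (as the near-pencil shows); it must be applied to a kernel vector whose existence is what the hypothesis $m > n$ supplies.
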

\begin{proof}
It is safe to assume that all the sets in $\mathcal{A}$ are of size more than $k$. 
(Otherwise, let $A \in \mathcal{A}$ be a set of size exactly $k$. Then, the set $\{B \setminus A|B \in \mathcal{A} \setminus \{A\}\}$ partitions the elements of $[n]$ not present in $A$: this leads to $m \leq n-k+1$.)
For the sake of contradiction, assume that $m \geq n+1$. 
Let $x_{i,j}$, $1 \leq i \leq m$, $1 \leq j \leq n$, be $mn$ variables with
\begin{align*}
x_{i,j}= \begin{cases}
1,\text{ if }j \in A_i \\
0,\text{ otherwise.}
\end{cases}
\end{align*}
Let $s > m^n$ be an integer. Consider a function $f:[m] \rightarrow [s]$. Let 
\begin{align}\label{eq:1}
	f(1)x_{1,1}+f(2)x_{2,1}+ \cdots +f(m)x_{m,1}=&c_1 &\hfill \text{(corresponding to element $e_1$)} \nonumber\\
	\vdots& &\\
		f(1)x_{1,n}+f(2)x_{2,n}+ \cdots +f(m)x_{m,n}=&c_n &\hfill \text{(corresponding to element $e_n$)} \nonumber
\end{align}

We define a \emph{profile} of the function $f$ corresponding to the family $\mathcal{A}$ as the $n$-tuple $(c_1,c_2,\ldots, c_n)$. Note that the number of distinct functions from $[m]$ to $[s]$ is $s^m$ and the number of distinct profiles is at most $(ms)^n$. 
Since the number of profiles is strictly less than the total number of functions from $[m]$ to $[s]$, by pigeonhole principle, it follows that there are two distinct functions $f_1, f_2$ that yield the same profile. 
Let $\tau=f_1-f_2$. Since $f_1$ and $f_2$ are distinct, $\tau$ is not the zero function. From the set of Equations (\ref{eq:1}), it follows that
\begin{align*}
\tau(1)x_{1,1}+\tau(2)x_{2,1}+ \cdots +\tau(m)x_{m,1}=&0& \hfill \text{(Equation $(b_1)$)} \nonumber\\
\vdots& &\\
\tau(1)x_{1,n}+\tau(2)x_{2,n}+ \cdots +\tau(m)x_{m,n}=&0 & \hfill \text{(Equation $(b_n)$)} \nonumber
\end{align*}
Adding the LHS and RHS of Equations $(b_1)$ to $(b_n)$, we get 
\begin{align}\label{eq:3}
\tau(1)|A_1|+\tau_2|A_2|+ \cdots+\tau(m)|A_m|=0.
\end{align}

Let $A_1=\{e_{i_1},e_{i_2},\ldots,e_{i_r}\}$.
Adding the LHS and RHS of the Equations $(b_{i_1}), \ldots, (b_{i_r})$, we get
\begin{align}\label{eq:4}
& \tau(1)|A_1|+\tau(2)|A_1 \cap A_2|+ \cdots+ \tau(m) |A_1 \cap A_m|  = 0 \nonumber \\
\implies &\tau(1)|A_1|+k(\tau(2)+\cdots+\tau(m)) = 0
\end{align}   
Writing similar equations corresponding to each set $A_i$ in $\mathcal{A}$, we get $m$ equations as follows.
\begin{align}\label{eq:5}
&\tau(1)|A_1|+k(\tau(2)+\cdots+\tau(m)) =0  \nonumber \\
&\tau(2)|A_2|+k(\tau(1)+ \tau(3)+\cdots+\tau(m)) =0  \nonumber \\
& \vdots  \\
&\tau(m)|A_m|+k(\tau(1)+ \cdots + \tau(m-1)) =0 \nonumber
\end{align}
Adding the LHS and RHS of every equation in (\ref{eq:5}), we get
\begin{align}\label{eq:6}
&\tau(1)|A_1|+\tau_2|A_2|+ \cdots+\tau(m)|A_m| + k (m-1)(\tau(1)+\cdots + \tau(m)) = 0 \nonumber\\
\implies& \tau(1)+\cdots + \tau(m) = 0  \hfill\text{ (Using Equation \ref{eq:3})}.
\end{align}

Since $\tau$ is not the zero function, without loss of generality, assume that $\tau(1) \neq 0$.
From Equation \ref{eq:4}, it follows that
\begin{align}
&\tau(1)|A_1|+k(\tau(2)+\cdots+\tau(m)) =0 \nonumber\\
\implies& \tau(1)|A_1| + k(-\tau(1)) =0 \text{ (From Equation \ref{eq:6})} \nonumber\\
\implies & \tau(1) (|A_1|-k) = 0.
\end{align}
This is a contradiction as $|A_1| > k$ and $\tau(1) \neq 0$. So, our assumption that $m\geq n+1$ is false. 
\end{proof}
\section{Concluding remarks}
The pigeonholing argument used to show that there exists a non-trivial solution to the homogeneous system of linear equations $(b_1)$ to $(b_n)$ whose coefficients are either $0$ or $1$ can be extended to any homogeneous system of $n$ linear equations on $m$ ($>n$) variables whose coefficients are integers by taking an appropriately large $s$ (Siegel's Lemma \cite{siegel1929einige}). Hence, a similar  pigeonholing argument can be used to give a proof, that does not rely on `tricks' of linear algebra, of other theorems in combinatorics that use a homogeneous system of linear equations like the Beck-Fiala Theorem \cite{beck1981integer}, Beck-Spencer Theorem \cite{beck1983balancing}, etc. In \cite{Vishwanathan13}, a counting based proof of the Graham-Pollak Theorem is given using similar ideas.  
\bibliographystyle{apalike}

\end{document}